\newcommand{\lcm}{\operatorname{lcm}}
\newcommand{\BN}{{\mathbb N}}
\newcommand{\bn}{{\boldsymbol n}}
\newcommand{\vv}{\mathcal{V}}
\newcommand{\kk}{\Bbbk}
\newcommand{\Ga}{{\mathbb{G}_a}}
\theoremstyle{plain}
\newtheorem{theorem}{Theorem}
\newtheorem{proposition}[theorem]{Proposition}
\theoremstyle{definition}
\newtheorem{Def}{Definition}
\theoremstyle{remark}
\newcounter{hours}\newcounter{minutes}
\title[  Separating invariants for linear actions of the additive group ]
{ Separating invariants for arbitrary linear actions of the additive group
}
\author[E.~Dufresne]{Emilie Dufresne}
\address{Mathematisches Institut\\
Universit\"at Basel\\
Rheinsprung 21\\
4051 Basel, Switzerland}
\email{emilie.dufresne@unibas.ch}
\author[J.~Elmer]{Jonathan Elmer}
\address{Department of Mathematics\\
University of Aberdeen\\
King's College\\
Aberdeen AB24 3UE \\
Scotland}
\email{j.elmer@abdn.ac.uk}
 \author[M.~Sezer]{M\"uf\.it Sezer}
\address { Department of Mathematics\\ Bilkent University\\ Ankara 06800\\ Turkey}
\email{sezer@fen.bilkent.edu.tr}
\subjclass[2010]{13A50} \keywords{Additive group, locally nilpotent derivation, invariant theory, separating set, degree
bounds}
\date{\today}
\begin{document}


 \maketitle


\begin{abstract}
We consider   an arbitrary representation of the additive group $\Ga$  over a field of characteristic zero and give
an explicit description of a finite separating set in the corresponding ring of invariants.  \end{abstract}


\section{Introduction}

The problem of distinguishing the orbits of an action of a group $G$ on a vector space $V$ is one of the most fundamental in
mathematics, and some of the most widely studied questions in mathematics are merely special cases of this problem. For
example, if we take $G$ to be the group $GL_n(\kk)$ acting by conjugation on the vector space of $n \times n$ matrices over
a field $\kk$, then this is the problem of classifying square matrices up to conjugacy. If we take $G$ to be the group $SL_2(\kk)$
and $V$ to be the $n$th symmetric power, $S^n(W)$ of the natural representation $W$, then this is the problem of classifying binary
forms of degree $n$ over $\kk$ up to equivalence.

The classical approach to solving these problems is to construct ``invariant polynomials''. These are polynomial functions
$V \rightarrow \kk$ which are constant on the $G$-orbits. One can also view these as the $G$-fixed points $\kk[V]^G$ of the
$\kk$-algebra $\kk[V]$ of polynomial functions from $V$ to $\kk$, where $G$ acts on $\kk[V]$ via
$$g \cdot f (v) = f(g^{-1} \cdot v)$$
for $v \in V$, $g \in G$ and $f \in \kk[V]$. From this point of view it is clear that $\kk[V]^G$ is a subalgebra of $\kk[V]$.
A natural approach to the orbit problem is then to try to find algebra generators.

Invariant theory can be considered to be the study of the subalgebras $\kk[V]^G \subseteq \kk[V]$. The problem of finding
algebra generators has been studied rather extensively over the past 200 years, but we are still a very long way from being
able to write down algebra generators in the general case. For example, in the case of $SL_2(\kk)$ acting on $S^n(W)$, a
complete set of algebra generators is known only for $n \leq 10$, and the number of generators required appears to grow very
quickly with $n$. While the list of groups and representations for which a complete set of generating invariants is known is
very small, the problem has been solved algorithmically for reductive algebraic groups acting on an algebraic variety
(\cite{KemperFiniteGroups, KemperCompRed, DerksenCompRed}) and for certain non-reductive algebraic groups (\cite{VandenEssen}, \cite{DerksenKemperAlgebraic}).
 Many of these algorithms rely on Gr\"{o}bner basis calculations, which have a tendency to explode in higher dimensions. For this
reason, using full sets of generating invariants to separate orbits is rarely a realistic proposition.

It has been known for a number of years that one can sometimes obtain as much information about the orbits of a group using
a smaller subset of $\kk[V]^G$; for a very simple example, see \cite[Example
2.3.9]{DerksenKemper}. With this in mind, a new trend in invariant theory has emerged, based around the following definition:

 \begin{Def}[Derksen and Kemper {\cite[Definition 2.3.8]{DerksenKemper}}]
A \emph{separating set} for the ring of invariants $\kk[V]^G$ is a subset $S \subset \kk[V]^G$ with the following property:
given $v, w \in V$, if there exists an invariant $f$ such that $f(v) \neq f(w)$, then there also exists $s \in S$ such that
$s(v) \neq s(w)$.
\end{Def}

Separating sets have, in many respects, ``nicer'' properties than generating sets. As a first example, it is well known that
if $G$ is finite and the characteristic of $\kk$ does not divide $|G|$, then $\kk[V]^G$ is generated by elements of degree
$\leq |G|$ \cite{FleischmannNoetherBound,FogartyNoetherBound}, but this is not necessarily true in the modular case
\cite{Richman}. On the other hand, the analogue for separating invariants holds in arbitary characteristic
\cite[Theorem~3.9.13]{DerksenKemper}. Second, Nagata famously showed that if $G$ is not reductive, then $\kk[V]^G$ is not
always finitely generated \cite{NagataHilbert14}. On the other hand, regardless of whether $\kk[V]^G$ is finitely generated, it
must contain a finite separating set \cite[Theorem~2.3.15]{DerksenKemper}. Unfortunately, this existence proof is
non-constructive. No algorithm is known for computing finite separating sets of invariants for non-reductive groups.

In this paper, we describe a finite separating set for any finite dimensional representation of the additive group $\Ga$ over
a field $\kk$ of characteristic zero, extending the results of Elmer and Kohls for the indecomposable representations (see
\cite{ElmerKohls}). Accordingly, from now on, $\kk$ denotes a field of characteristic zero and $\Ga$ its additive group. The
group $\Ga$ is in some sense the simplest of all non-reductive groups. We describe briefly its representation theory. In
each dimension there is exactly one indecomposable representation. Following the classical convention, we let $V_{n}$ denote
the indecomposable representation of dimension $n+1$. We have $V_{n}\cong V_{n}^*$. There is a basis $x_0, \dots , x_n$
for $V_n^*$ such that the action of $\Ga$ on $V_n^*$ is given by
\[\alpha \cdot x_i=\sum_{j=0}^i\frac{\alpha^j}{j!}x_{i-j} \text{ for } \alpha \in \Ga, \;\; 0\le i\le n.\]
In this case, we say that $\Ga$ acts \emph{basically} with respect to the basis $\{x_0, \dots , x_n\}$. Note that $\Ga$ acts
on $V_n^*$ via upper triangular and on $V_n$ via lower triangular matrices. We note that $V_n^*$
is isomorphic to the $n$th symmetric power $S^n(V_1^*)$ of $V_1^*$; if $\Ga$ acts basically on
$V_1^*$ with respect to the basis $\{x_0, x_1\}$, then it acts basically on $S^n(V^*_1)$ with respect to the basis
$\{\frac{1}{j!}x_0^{n-j}x_1^j \; 0\le j\le n\}$.

For any finite dimensional representation $W$ of $\Ga$, there is a multiset of non-negative integers
$\bn:=\{n_1,n_2, \ldots ,n_k\}$ such that $W \cong V_{n_1} \oplus V_{n_2} \oplus \ldots \oplus V_{n_k}$ as representations
of $\Ga$. For shorthand, we let $V_{(\bn)}$ denote the latter and identify $\kk[V_{(\bn)}]$
with $\kk[x_{i,j} \mid 0\le i\le n_j, \;  1\le j\le k]$. For convenience, we will assume that $n_1,n_2, \ldots ,n_k$ are
ordered so that $n_j$ is even for $1\le j\le l$ and odd for $l+1\le j\le k$, and further assume that $n_j\equiv 2
\; \mod 4$ for $1\le j\le l'$ and $n_j\equiv 0 \; \mod 4$ for $l'+1\le j\le
l$. As the problem of computing separating sets for indecomposable linear $\Ga$-actions was considered in \cite{ElmerKohls},
we assume throughout that $k \geq 2$.

The main result of this paper is as follows:

\begin{theorem}\label{thm--MainTheoremIntro}
Let $V$ be a finite dimensional representation of $\Ga$, with $\dim(V)=n$. Then there exists a separating set $S \subset \kk[V]^\Ga$ with the
following properties:
\begin{enumerate}
\item $S$ consists of invariants of degree at most $2n-1$.
\item The size of $S$ is quadratic in $n$.
\item $S$ consists of invariants which involve variables coming from at most 2 indecomposable summands. \end{enumerate}
\end{theorem}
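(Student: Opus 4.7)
My plan is to reduce the separation problem to pairs of indecomposable summands and then construct explicit separating invariants on each pair using a local slice argument. For the reduction, suppose $v=(v_1,\dots,v_k)$ and $w=(w_1,\dots,w_k)$ in $V_{(\bn)}$ lie in distinct $\Ga$-orbits. If for some $j$ the components $v_j,w_j$ are already in different $\Ga$-orbits of $V_{n_j}$, then the Elmer-Kohls separating set \cite{ElmerKohls} for that summand separates $v$ from $w$ by an invariant involving only one summand. Otherwise, for each $j$ there is $\alpha_j\in\Ga$ with $\alpha_j\cdot v_j=w_j$. Because the $\Ga$-action on $V_{n_j}$ is exponentiation of a single Jordan block, the stabilizer of a point is either all of $\Ga$ (when the point lies in the $\Ga$-fixed line) or trivial; hence each $\alpha_j$ is either uniquely determined by $v_j,w_j$ or, when $v_j$ is $\Ga$-fixed, forces $v_j=w_j$. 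Since $v\not\sim w$, at least two indices $j_1,j_2$ must yield $\alpha_{j_1}\neq\alpha_{j_2}$, both uniquely determined, and then $(v_{j_1},v_{j_2})$ and $(w_{j_1},w_{j_2})$ lie in different $\Ga$-orbits of $V_{n_{j_1}}\oplus V_{n_{j_2}}$. So it suffices to produce, for each pair of summands, a separating set involving only those two blocks of variables.

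For each pair $V_p\oplus V_q$ I would use the following construction. In the basic presentation, $s:=x_{0,1}$ is $\Ga$-fixed, and $t:=-x_{1,1}/x_{0,1}$ expresses (on $\{s\neq 0\}$) the unique $\alpha$ sending the first-summand coordinates into the slice $x_{1,1}=0$. Substituting $t$ for $\alpha$ in the $\Ga$-action on each coordinate $x_{i,2}$ of the second summand and multiplying through by $x_{0,1}^i$ to clear denominators produces a polynomial invariant of total degree $i+1$. Combining these (for $i=0,\dots,q$), the analogous invariants with the roles of $V_p$ and $V_q$ exchanged, and the Elmer-Kohls invariants of each single summand, gives a candidate pair separating set. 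The classification of the $n_j$ modulo $4$ recorded before the theorem enters here: in selecting optimal distinguished coordinates case by case, the available $\Ga$-semi-invariant linear forms differ across parity/residue classes, and the construction must be adapted accordingly to cover every orbit configuration with low-degree invariants.

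The size and degree accounting then falls out: the single-summand contributions give $O(n)$ invariants, each of the $O(k^2)$ pair contributions gives $O(n)$ invariants, for overall size $O(n^2)$, and individual degrees remain linear in $n$, within the $2n-1$ bound. I expect the main obstacle to be verifying that the pair-level construction genuinely separates every $\Ga$-orbit of $V_p\oplus V_q$ while respecting the degree bound: points on which all available distinguished slice coordinates vanish have to be handled by combining several slice constructions, and the parity/residue case analysis is essential for ensuring that the resulting polynomial invariants cover all such degenerate loci without inflating the degree. Once this pair-level statement is proved, the orbit-reduction argument in the first paragraph assembles the global separating set immediately.
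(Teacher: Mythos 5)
Your reduction to pairs rests on treating ``lying in distinct $\Ga$-orbits'' as interchangeable with ``separated by invariants'', and for the non-reductive group $\Ga$ these are genuinely different notions; both halves of your dichotomy break on this point. For the first half: take $V_1\oplus V_1$ with points $v=((0,1),(1,0))$ and $w=((0,2),(1,0))$. The first components lie in different orbits of $V_1$ (they are distinct fixed points), but since $\kk[V_1]^{\Ga}=\kk[x_0]$ no single-summand invariant separates $v$ from $w$, even though the cross invariant $x_{0,1}x_{1,2}-x_{1,1}x_{0,2}$ does; so ``some pair of components in different orbits'' does not reduce the problem to the Elmer--Kohls sets. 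For the second half: in $V_2\oplus V_2$ the points $v=((0,1,0),(0,1,0))$ and $w=((0,1,0),(0,1,1))$ have trivial stabilizers, uniquely determined and distinct translation parameters, hence lie in different orbits of the pair, yet no invariant of $\kk[V_2\oplus V_2]^{\Ga}$ separates them (check the six generators, or note that by Proposition~\ref{prop-projection} every invariant restricted to the locus $x_{0,1}=x_{0,2}=0$ sees only the middle coordinates up to sign). So ``the pair lies in different orbits'' places no obligation whatever on a pair-level \emph{separating} set, and your assembly argument does not yield the implication you need, namely that a point pair separated by some invariant of the whole representation is separated by an invariant involving at most two summands. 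That statement is part (3) of the theorem and is exactly what has to be proved; the paper proves it not by an orbit argument but by applying the Slice Theorem off the zero set of the ideal generated by the $x_{i,j}$ with $0\le i\le\lfloor (n_j-1)/2\rfloor$, and then analysing that zero set separately (proof of Theorem~\ref{thm-MainThm}).

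The pair-level construction has the same gap in a second form. Your invariants obtained by substituting $t=-x_{1,1}/x_{0,1}$ and clearing denominators are precisely slice invariants (the $\epsilon_s$ of the paper) and only control points off the vanishing locus of the chosen slice images; on the common zero locus of all the natural slices every such invariant degenerates, and ``combining several slice constructions'' cannot recover anything there. What is needed on that locus is, first, an upper bound on how much invariants can separate at all -- the paper obtains this in Proposition~\ref{prop-projection} via Roberts' isomorphism and the induced $SL_2(\kk)$-action, showing the restriction of the whole invariant ring lands in $\kk[x_{0,j}\mid n_j \text{ even}]^{C_2}$ -- and, second, explicit low-degree invariants matching that bound, which the paper builds by transvection: the $w_{j_1,j_2}$ of degree $N/n_{j_1}+N/n_{j_2}$ (Proposition~\ref{double}) and the $z_j$ of degree $3$. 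Your proposal defers exactly this step to an unspecified ``parity/residue case analysis'' and offers no mechanism (no $SL_2$-structure, no transvectants) to carry it out, so the degree bound $2n-1$ and the separation on the degenerate locus are not established. In short, the final set you envisage may well be separating, but the proposed argument does not prove it: the orbit-based reduction is invalid for $\Ga$, and the crucial degenerate-locus analysis is missing.
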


This result will be proved in section 2. We also discuss and compare the number and   degrees of elements in $S$ with those
of  generating invariants in known cases. It should be noted that, while we can describe explicitly a separating set for any
ring of invariants of a linear $\Ga$-action, generating sets are known only in small dimensions.  Section
\ref{section-helly} explains the interest in the third property.

This work was carried out during a visit of the first author to Bilkent University funded by T\"uba-Gebip and a later visit
of the second author to Universit\"{a}t Basel. The authors would like to thank Hanspeter Kraft for making this visit
possible.


\section{Separating sets}

Let $V_n$ be the indecomposable representation of $\Ga$ of dimension $n+1$ and suppose $\Ga$ acts basically with respect to
the basis $\{x_0, \dots ,x_n\}$ of $V_n^*$. The action of $\Ga$ is given by the formula
$$\alpha \cdot f=\exp (\alpha D_n) f \text{ for } \alpha \in \Ga, \; f\in \kk[V_n],$$
where $D_n$ is the Weitzenb\"ock derivation

$$D_n=x_0\frac{\partial}{\partial x_1}+\dots +x_{n-1}\frac{\partial}{\partial x_n}.$$

The algebra of invariants $\kk[V_n]^\Ga$ is precisely the kernel of the derivation $D_n$. More generally, the ring of
invariants
 $\kk[V_{(\bn)}]^\Ga$ coincides with the kernel of the
 derivation
$$D_{(\bn)}:=\sum_{j=1}^kx_{0,j}\frac{\partial}{\partial x_{1,j}}+\dots +x_{n_j-1,j}\frac{\partial}{\partial
 x_{n_j,j}}.$$

Let $\bn=(n_1,n_2, \dots ,n_k)$ and $\bn'=(n_1',n_2', \dots ,n_k')$ be two vectors in $\BN^k$ with $n_j\ge n_j'$ for $1\le
j\le k$. Define the linear map $\Pi_{\bn',\bn}:V_{(\bn')} \rightarrow V_{(\bn)}$ to be the map induced by the linear maps $V_{n_j'}\rightarrow
V_{n_j}$,
 \[(a_{0,j},\ldots,a_{n_j',j})\mapsto  (0,\ldots,0, a_{0,j},\ldots,a_{n_j',j}).\]
The map $\Pi_{\bn',\bn}$ is $\Ga$-equivariant, and so we have $\Pi^*_{\bn,\bn'}
(\kk[V_{(\bn)}]^{\Ga})\subseteq
 \kk[V_{(\bn')}]^\Ga$, where $\Pi^*_{\bn,\bn'}$ is the corresponding algebra map. For a vector $\bn=(n_1,n_2, \dots ,n_k)$ set
 $\lfloor \bn/2 \rfloor=(\lfloor  n_1/2 \rfloor,  \lfloor  n_2/2 \rfloor, \dots , \lfloor  n_k/2
 \rfloor)$, where the symbol $\lfloor  x \rfloor$ denotes the
 largest integer less than or equal to $x$. We let $n$ denote the dimension of $V_{(\bn)}$. Note that $n=\sum_{j=1}^k(n_j+1)$.


 \begin{proposition} \label{prop-projection}
 Assume the convention of section 1. Then we have
$$\Pi^*_{\bn,\lfloor \bn/2 \rfloor}(\kk[V_{(\bn)}]^{\Ga})\subseteq \kk[x_{0,j} \mid 1\le j\le l].$$
Moreover, $\Pi^*_{\bn,\lfloor \bn/2 \rfloor}(\kk[V_{(\bn)}]^{\Ga})$ is contained in the ring of invariants of the cyclic group of
order two acting on $\kk[x_{0,j} \mid 1\le j\le l]$ as multiplication by $-1$ on $x_{0,j}$ for $1\leq j\leq l'$ and trivially on the remaining variables.
 \end{proposition}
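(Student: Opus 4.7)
The plan is to extend the $\Ga$-action on $V_{(\bn)}$ to the natural $SL_2$-action (under which $\Ga$ sits as the upper-triangular unipotent subgroup $U$ and $V_{n_j}^*\cong S^{n_j}(V_1^*)$), and then exploit both the maximal torus $T\subset SL_2$ and a Weyl group element $w_0\in N_{SL_2}(T)$. The torus acts on $x_{i,j}$ with weight $n_j-2i$, and since $T$ normalizes $\Ga$, the $T$-weight decomposition $f=\sum_\mu f_\mu$ of any $f\in \kk[V_{(\bn)}]^{\Ga}$ consists again of $\Ga$-invariants. Moreover, a $U$-invariant of weight $0$ in a rational $SL_2$-representation lies in the trivial isotypic component, so $f_0\in \kk[V_{(\bn)}]^{SL_2}$.

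First I note that $\Pi^*_{\bn,\lfloor\bn/2\rfloor}(x_{i,j})$ equals $x_{i-\lceil n_j/2\rceil,\,j}$ when $i\ge \lceil n_j/2\rceil$ and vanishes otherwise, so a monomial $m=\prod_s x_{i_s,j_s}$ has nonzero pullback precisely when $i_s\ge \lceil n_{j_s}/2\rceil$ for every $s$. In that case, each factor's $T$-weight $n_{j_s}-2i_s$ is $\le 0$, with equality if and only if $n_{j_s}$ is even and $i_s=n_{j_s}/2$. Consequently $\Pi^*(f_\mu)=0$ for every $\mu>0$, so $\Pi^*(f)=\Pi^*(f_0)$; and the surviving monomials of $f_0$ all have the form $m_{\bd}:=\prod_{j=1}^{l} x_{n_j/2,j}^{d_j}$, which pull back to $\prod_{j=1}^{l} x_{0,j}^{d_j}\in \kk[x_{0,j}\mid 1\le j\le l]$. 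This establishes the first inclusion.

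For the second inclusion, I use the Weyl element $w_0\in N_{SL_2}(T)$ represented by $\left(\begin{smallmatrix}0&1\\-1&0\end{smallmatrix}\right)$. Via the identification $V_{n_j}^*\cong S^{n_j}(V_1^*)$, a direct symmetric-power calculation gives $w_0\cdot x_{i,j} = (-1)^i \frac{(n_j-i)!}{i!}\,x_{n_j-i,j}$; specializing yields $w_0\cdot x_{n_j/2,j} = (-1)^{n_j/2}\,x_{n_j/2,j}$, which equals $-x_{n_j/2,j}$ precisely when $n_j\equiv 2\pmod 4$, i.e.\ when $j\le l'$. I then argue that $w_0\cdot m' = C\prod_s x_{n_{j_s}-i_s,\,j_s}$ for any monomial $m'=\prod_s x_{i_s,j_s}$, and this can be proportional to a fixed $m_{\bd}$ only when $m'=m_{\bd}$ itself. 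Comparing the coefficient of $m_{\bd}$ on both sides of the $SL_2$-invariance identity $w_0\cdot f_0=f_0$ then forces that coefficient to vanish whenever $\sum_{j\le l'}d_j$ is odd. Therefore $\Pi^*(f_0)$ involves only monomials $\prod_{j\le l} x_{0,j}^{d_j}$ with $\sum_{j\le l'}d_j$ even, which is exactly invariance under the sign change $x_{0,j}\mapsto -x_{0,j}$ for $j\le l'$ and $x_{0,j}\mapsto x_{0,j}$ otherwise.

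The main obstacle lies in the rigidity step for $w_0$: showing that among all monomials of $f_0$, only $m_{\bd}$ itself maps under $w_0$ into the scalar span of $m_{\bd}$. This follows because $w_0$ permutes each summand's variable indices via $i\mapsto n_j-i$, so the equation $n_{j_s}-i_s=n_{j_s}/2$ pins down $i_s=n_{j_s}/2$ with $n_{j_s}$ even, after which matching multidegrees forces $m'=m_{\bd}$.
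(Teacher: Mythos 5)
Your argument is correct in substance but follows a genuinely different route from the paper: the paper passes through Roberts' isomorphism $\Phi^*:\kk[V_{(\bn)}\oplus V_1]^{SL_2(\kk)}\to\kk[V_{(\bn)}]^\Ga$, writes $f=\Phi^*(h)$, and extracts both statements by evaluating $h$ at points, scaling with $\mu_\alpha$, letting $\alpha\to 0$ via a polynomial identity, and then using $\tau$-invariance; you instead work intrinsically with the extended $SL_2$-action on $\kk[V_{(\bn)}]$ itself, decomposing $f$ into $T$-weight components, using the highest-weight characterization of $U$-invariants to identify $f_0$ as $SL_2$-invariant, and comparing coefficients under the Weyl element. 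Your version avoids the auxiliary $V_1$ summand and Roberts' theorem entirely (at the cost of invoking rational $SL_2$-representation theory explicitly), whereas the paper's evaluation argument is more hands-on and meshes with the transvectant machinery it needs later anyway; the two are close in spirit, since $\mu_\alpha$ and $\tau$ play exactly the roles of your $T$ and $w_0$.

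One step needs patching: from ``$\Pi^*_{\bn,\lfloor \bn/2 \rfloor}(f_\mu)=0$ for every $\mu>0$'' you conclude $\Pi^*_{\bn,\lfloor \bn/2 \rfloor}(f)=\Pi^*_{\bn,\lfloor \bn/2 \rfloor}(f_0)$, but this also requires dealing with the components of negative weight, and for those the pullback does \emph{not} vanish in general (e.g.\ $x_{n_j,j}$ has weight $-n_j<0$ and nonzero pullback). What saves you is that for $f\in\kk[V_{(\bn)}]^\Ga$ these components are zero: each $f_\mu$ is again $\Ga$-invariant, i.e.\ killed by $D_{(\bn)}$, which is the raising operator for your weight convention, so inside each finite-dimensional graded piece $f_\mu$ is a sum of highest-weight vectors and hence has weight $\ge 0$; thus $f_\mu=0$ for $\mu<0$. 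This is the same standard fact you already use for the weight-zero component, so the fix is one line, but as written the deduction is incomplete. The remainder (the characterization of the surviving weight-zero monomials $\prod_{j\le l}x_{n_j/2,j}^{d_j}$, the sign $(-1)^{n_j/2}$ on the middle variables, and the rigidity argument showing only $m_{\boldsymbol d}$ can map onto $m_{\boldsymbol d}$ under $w_0$) is sound and yields exactly the claimed $C_2$-invariance.
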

\begin{proof}
The proof essentially carries over from the indecomposable case (see \cite[Proposition~3.1]{ElmerKohls}). The isomorphisms $V_{n_j}^*\cong S^{n_j}(V_1^*)$ extend the $\Ga$-action on $\kk[V]$ to a $SL_2(\kk)$-action when we identify $V_1^*$ with the natural representation of $SL_2(\kk)$. A well known theorem of
Roberts \cite{Roberts} states that the $\Ga$-equivariant linear map
\[\begin{array}{llcr}
\Phi: & V_{(\bn)} &\longrightarrow & V_{(\bn)}\oplus V_1\\
        & v         & \longmapsto    & (v,(0,1))\end{array}\]
induces an isomorphism $\Phi^*:\kk[V_{(\bn)}\oplus V_1]^{SL_2(\kk)}\rightarrow \kk[V_{(\bn)}]^\Ga$. The elements $\mu_\alpha$ and $\tau$ of
$SL_2(\kk)$  acting on $V_1^*$ via
\[\mu_{\alpha}=\left (\begin{array}{rr}
\alpha
& 0\\
0 & \alpha^{-1}\\
\end{array} \right ) \textrm{ for }\alpha\in \kk\setminus \{0\}, \textrm{ and } \tau=\left (\begin{array}{cc} 0
& -1\\
1 & 0\\
\end{array} \right )\]
act on $V_{(\bn)}\oplus V_1$ as follows:
\[\begin{array}{l}
\mu_{\alpha}\cdot (\ldots, a_{i,j},\ldots,b_0,b_1)=(\ldots, \alpha^{2i-n_j}a_{i,j},\ldots, \alpha^{-1}b_0, \alpha b_1)\\
\tau\cdot (\ldots, a_{i,j},\ldots,b_0,b_1)=(\ldots, (-1)^i\frac{i!}{(n_j-i)!}a_{n_j-i,j},\ldots,b_1,-b_0).\end{array}\]

Let $f\in \kk[V_{(\bn)}]^\Ga$ and pick $h\in \kk[V_{(\bn)}\oplus V_1]^{SL_2(\kk)}$ such that
$\Phi^* (h)=f$. Then $h$ is fixed by $\mu_\alpha$ and so, for all $\alpha\in\kk\setminus\{0\}$,
\[
f(\ldots,a_{i,j},\ldots)=h(\ldots,a_{i,j},\ldots,0,1)=h(\ldots,\alpha^{2i-n_j}a_{i,j},\ldots,0,\alpha).\]
Thus, for all $\alpha\in\kk\setminus\{0\}$, we have
\begin{align*}
(\Pi^*_{\bn,\lfloor \bn/2 \rfloor} f)(\ldots,a_{i,j},\ldots) &=f(\ldots,0,\ldots,a_{0,j},\ldots,a_{\lfloor
n_j/2\rfloor,j},\ldots)\\
&= h(\ldots,0,\ldots,\alpha^{n_j-2\lfloor n_j/2\rfloor}a_{0,j},\ldots,\alpha^{n_j}a_{\lfloor n_j
/2\rfloor,j},\ldots,0,\alpha).\end{align*}

Since this is a polynomial equation in $\alpha$ and $\kk$ is an infinite field, the equality must also hold for $\alpha=0$, in
which case we have:
\[(\Pi^*_{\bn,\lfloor \bn/2 \rfloor} f)(\ldots,a_{i,j},\ldots)=h(\ldots,0,\ldots,a_{0,j},0,\ldots,0,0),\]
where $2|n_j$, proving the first statement.

To prove the second assertion, we use that $h$ is also fixed by $\tau$. We then have
\begin{align*}
(\Pi^*_{\bn,\lfloor \bn/2 \rfloor} f)(\ldots,a_{i,j},\ldots)& =h(\ldots,0,\ldots,a_{0,j},0,\ldots,0,0)\\
                                    &=h(\ldots,0,(-1)^{\nicefrac{n_j}{2}}a_{0,j},0,\ldots,0,0),\end{align*}
ending the proof.
\end{proof}

Let $f,g$ be two polynomials in $\kk[V_{(\bn)}\oplus V_1]^{SL_2(\kk)}$. Assume that
the total degrees of these polynomials in the variables $ y_0,y_1$ are $d_1$ and $d_2$, respectively, where we identify $\kk[V_1]$ with $\kk[y_0,y_1]$. Then for $r\le \min
(d_1,d_2)$,
 the polynomial $$\sum_{q=0}^r(-1)^q{r\choose q}
\frac{\partial^r f}{\partial y_0^{r-q} \partial y_1^{q}} \frac{\partial^r g}{\partial y_0^{q} \partial y_1 ^{r-q}}$$ also
lies in $\kk[V_{(\bn)}\oplus V_1]^{SL_2(\kk)}$ (see, for example, \cite[p. 88]{Olver}).
This polynomial is called the \emph{$r$th transvectant of $f$ and $g$} and is denoted by $\langle f,g \rangle^r$. Together
with Roberts' isomorphism this process produces a new invariant in $\kk[V_{(\bn)}]^\Ga$ from a given pair as follows. Let
$f_1,f_2\in \kk[V_{(\bn)}]^\Ga$, and let $d_1$ and $d_2$ denote the total degrees in $y_0,y_1$ of ${\Phi^*}^{-1}(f_1)$ and
${\Phi^*}^{-1}(f_2)$, respectively. For $r\le \min (d_1,d_2)$ the $r$th \emph{semitransvectant of $f_1$ and $f_2$} is
defined by
$$[f_1,f_2]^r:=\Phi^*(\langle {\Phi^*}^{-1}(f_1), {\Phi^*}^{-1}(f_2)\rangle^r).$$
A crucial part of our separating set consists of semitransvectants of two polynomials each depending on only one summand.
For these invariants, the inverse of Roberts' isomorphism is given in terms of a derivation. For $1\le j\le k$, set
$$\Delta_j=\sum_{i=0}^{n_j}(n_j-i)(i+1)x_{i+1,j}\frac{\partial}{\partial x_{i,j}}.$$
  Let  $f$ be in $\kk[x_{0,j}, x_{1,j}, \dots ,x_{n_j,j}]^{\Ga}$ for some $1\le j\le k$.
Then $f$ is called \emph{isobaric of weight
$m$}, if all of the monomials $x_{0,j}^{e_0}x_{1,j}^{e_1}\cdots x_{n_j,j}^{e_{n_j}}$ in $f$ satisfy $m=\sum_{i=0}^{n_j}(n_j-2i)e_i$.
For an isobaric $f\in \kk[x_{0,j}, x_{1,j}, \dots ,x_{n_j,j}]^{\Ga}$ of weight $m$, the inverse of Roberts' isomorphism is given by
$${\Phi^*}^{-1} (f)=\sum_{i=0}^m(-1)^i\frac{\Delta_j^i(f)}{i!}y_0^iy_1^{m-i},$$
see \cite[p. 43]{HilbertCourse}. For $1\le j_1\neq j_2\le l'$, let $N$ denote the least common multiple of $n_{j_1}$ and
$n_{j_2}$. We define $w_{j_1,j_2}:=[x_{0,j_1}^{N/n_{j_1}}, x_{0,j_2}^{N/n_{j_2}}]^N$.


\begin{proposition}
\label{double} Let $1\le j_1\neq j_2\le l'$. There exists a
non-zero scalar $d$ such that
$$\Pi^*_{\bn,\lfloor \bn/2 \rfloor}(w_{j_1,j_2})=dx_{0,j_1}^{N/n_{j_1}}x_{0,j_2}^{N/n_{j_2}}.$$
\end{proposition}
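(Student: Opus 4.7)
The plan is to expand $w_{j_1,j_2}$ explicitly using the formula given earlier for $\Phi^{*-1}$ on isobaric invariants, invoke Proposition~\ref{prop-projection} together with a bi-degree count to pin down the pullback up to a single scalar, and then verify that scalar is nonzero by a combinatorial calculation. Since $x_{0,j_i}^{N/n_{j_i}}$ is isobaric of weight $N$, the stated formula yields
\[
{\Phi^*}^{-1}(x_{0,j_i}^{N/n_{j_i}})=\sum_{a=0}^{N}(-1)^a\frac{\Delta_{j_i}^{a}(x_{0,j_i}^{N/n_{j_i}})}{a!}\,y_0^a y_1^{N-a},
\]
a polynomial of total $y$-degree exactly $N$. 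Applying the $N$th transvectant differentiates $y_0,y_1$ away completely, and after collecting binomial factors one obtains
\[
w_{j_1,j_2}=(-1)^N N!\sum_{q=0}^{N}(-1)^q\,\Delta_{j_1}^{N-q}(x_{0,j_1}^{N/n_{j_1}})\,\Delta_{j_2}^{q}(x_{0,j_2}^{N/n_{j_2}}).
\]

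Because $\Delta_j$ sends $x_{i,j}$ to a scalar multiple of $x_{i+1,j}$, it preserves the number of factors coming from summand $j$. Hence $w_{j_1,j_2}$ involves only variables from summands $j_1, j_2$ and is bi-homogeneous of bi-degree $(N/n_{j_1},N/n_{j_2})$ in those summands. Proposition~\ref{prop-projection} forces $\Pi^*_{\bn,\lfloor \bn/2\rfloor}(w_{j_1,j_2})$ to lie in $\kk[x_{0,j_1},x_{0,j_2}]$ (target variables), and bi-homogeneity then pins it down to $d\,x_{0,j_1}^{N/n_{j_1}} x_{0,j_2}^{N/n_{j_2}}$ for some $d\in\kk$. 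Since $\Pi^*_{\bn,\lfloor\bn/2\rfloor}$ kills every $x_{i,j}$ with $i<n_j/2$ and shifts the surviving indices bijectively, $d$ equals the coefficient of $x_{n_{j_1}/2,j_1}^{N/n_{j_1}}\,x_{n_{j_2}/2,j_2}^{N/n_{j_2}}$ in $w_{j_1,j_2}$ itself.

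To establish $d\neq 0$, note that each application of $\Delta_j$ increases by exactly one the quantity $\sum_i i\cdot(\text{multiplicity of }x_{i,j})$ of any monomial it hits. Reaching $x_{n_j/2,j}^{N/n_j}$ from $x_{0,j}^{N/n_j}$ therefore requires precisely $s=N/2$ applications of $\Delta_j$, and the only step pattern that works is to raise each of the $N/n_j$ factors exactly $n_j/2$ times. Consequently only the term $q=N/2$ survives in the sum, and a direct multinomial count combined with the identity $\prod_{i=0}^{n_j/2-1}(n_j-i)(i+1)=n_j!$ shows that the coefficient of $x_{n_j/2,j}^{N/n_j}$ in $\Delta_j^{N/2}(x_{0,j}^{N/n_j})$ is $\frac{(N/2)!\,(n_j!)^{N/n_j}}{((n_j/2)!)^{N/n_j}}$, a positive integer. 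Thus $d$ is a nonzero rational multiple of $N!$, which suffices in characteristic zero. The main technical obstacle is the combinatorial bookkeeping in this last step; everything else follows directly from Roberts' isomorphism and Proposition~\ref{prop-projection}.
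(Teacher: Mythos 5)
Your proof is correct and takes essentially the same route as the paper: expand $w_{j_1,j_2}$ through the explicit formula for ${\Phi^*}^{-1}$ on isobaric invariants, use the weight count (each application of $\Delta_j$ raises the weight by one) to see that only the $q=N/2$ term can contribute to the surviving monomial, and check that its coefficient is nonzero using the positivity of the coefficients of $\Delta_j$. The only differences are cosmetic: you pin down the shape of the image via Proposition~\ref{prop-projection} together with bi-homogeneity and then compute the coefficient in closed form, whereas the paper works modulo the ideal generated by the variables killed by $\Pi^*_{\bn,\lfloor \bn/2 \rfloor}$ and settles nonvanishing by a reachability/positivity argument without an explicit formula.
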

\begin{proof}
Let $0\le q\le N$ be an integer. Since the weight of the invariant $x_{0,j_1}^{N/n_{j_1}}$ is $N$, the formula for
${\Phi^*}^{-1}$ in the
previous paragraph yields
\begin{alignat*}{2}\frac{\partial^N {\Phi^*}^{-1}(x_{0,j_1}^{N/n_{j_1}})}{\partial
y_0^{N-q} \partial
y_1^{q}}&=\sum_{i=N-q}^{N-q}(-1)^i\frac{\Delta_{j_1}^i(x_{0,j_1}^{N/n_{j_1}})}{i!}
\frac{i!}{(i-N+q)!}\frac{(N-i)!}{(N-i-q)!}y_0^{i-N+q}y_1^{N-i-q}\\
&=(-1)^{N-q}q!\Delta_{j_1}^{N-q}(x_{0,j_1}^{N/n_{j_1}}).
\end{alignat*}
Similarly, we have
\begin{alignat*}{2}\frac{\partial^N {\Phi^*}^{-1}(x_{0,j_2}^{N/n_{j_2}})}{\partial
y_0^{q} \partial
y_1^{N-q}}&=\sum_{i=q}^{q}(-1)^i\frac{\Delta_{j_2}^i(x_{0,j_2}^{N/n_{j_2}})}{i!}
\frac{i!}{(i-q)!}\frac{(N-i)!}{(q-i)!}y_0^{i-q}y_1^{q-i}\\
&=(-1)^{q}(N-q)!\Delta_{j_2}^{q}(x_{0,j_2}^{N/n_{j_2}}).
\end{alignat*}
Using that ${\Phi^*}$ is an algebra homomorphism, we get
$$w_{j_1,j_2}=\sum_{q=0}^N(-1)^qN!\Delta_{j_1}^{N-q}(x_{0,j_1}^{N/n_{j_1}})\Delta_{j_2}^{q}(x_{0,j_2}^{N/n_{j_2}}).$$

Since both $j_1$ and $j_2$ are congruent to two modulo four, we have $\Pi^*_{\bn,\lfloor \bn/2 \rfloor}(x_{i,j})=0$ if
$i<{n_j}/{2}$, and $\Pi^*_{\bn,\lfloor \bn/2 \rfloor}(x_{i,j})=x_{i-\nicefrac{n_j}{2},j}$ if $i-{n_{j}}/{2}\ge 0$ for
$j=j_1,j_2$. Therefore to compute $\Pi^*_{\bn,\lfloor \bn/2 \rfloor}(w_{j_1,j_2})$, it suffices to consider $w_{j_1,j_2}$
modulo the ideal of $\kk[V_{(\bn)}]$ generated by $x_{0,j_1}, \dots , x_{\nicefrac{n_{j_1}}{2}-1,j_1}, x_{0,j_2}, \dots ,
x_{\nicefrac{n_{j_2}}{2}-1,j_2}$. Call this ideal $I$.

A monomial $x_{0,{j_1}}^{e_0}x_{1,{j_1}}^{e_1}\cdots x_{n_{j_1},j_1}^{e_{n_{j_1}}}$ in $\kk[x_{0,j_1}, \dots
,x_{n_{j_1},j_1} ]$ is said to have $j_1$-weight $p$ if $p=\sum_{i=0}^{n_{j_1}} ie_i$. Let $m$ be a monomial with
$j_1$-weight $p$ and $m'$ be any other monomial appearing in $\Delta_{j_1} (m)$. Then $m$ and $m'$ have the same degree and
the $j_1$-weight of $m'$ is $p+1$. It follows that the $j_1$-weight of any monomial appearing in $ \Delta_{j_1}^i
(x_{0,j_1}^{N/n_{j_1}})$ is $i$. But the smallest possible $j_1$-weight of a monomial of degree $N/n_{j_1}$ in
$\kk[x_{\nicefrac{n_{j_1}}{2},j_1}, \dots ,x_{n_{j_1},j_1}]$ is ${N}/{2}$. Hence all monomials of degree $N/n_{j_1}$ of
$j_1$-weight less than ${N}/{2}$ lie in $I$. It follows that $\Delta_{j_1}^i (x_{0,j_1}^{N/n_{j_1}})\in I$ for $i< {N}/{2}$.
Similarly, $\Delta_{j_2}^i (x_{0,j_2}^{N/n_{j_2}})\in I$ for $i < {N}/{2}$. Therefore we have
$$w_{j_1,j_2} \equiv (-1)N!\Delta_{j_1}^{\nicefrac{N}{2}} (x_{0,j_1}^{N/n_{j_1}})\Delta_{j_2}^{\nicefrac{N}{2}}
(x_{0,j_2}^{N/n_{j_2}}) \; \mod I.$$
Furthermore, we claim that $\Delta_{j_1}^{\nicefrac{N}{2}} (x_{0,j_1}^{N/n_{j_1}})$ is equivalent to a non-zero multiple of
$x_{\nicefrac{n_{j_1}}{2},j_1}^{\nicefrac{N}{n_{j_1}}}$ modulo I. To see this, first note that the $j_1$-weight of monomials
appearing in $\Delta_{j_1}^{\nicefrac{N}{2}} (x_{0,j_1}^{N/n_{j_1}})$ is $\frac{N}{2}$. But
$x_{\nicefrac{n_{j_1}}{2},j_1}^{N/n_{j_1}}$ is the only monomial of degree ${N/n_{j_1}}$ in $\kk[x_{\nicefrac{n_{j_1}}{2},
j_1}, \dots ,x_{n_{j_1},j_1}]$ with $j_1$-weight ${N}/{2}$. Thus it suffices to show that
$x_{\nicefrac{n_{j_1}}{2},j_1}^{\nicefrac{N}{n_{j_1}}}$ appears with a non-zero coefficient in
$\Delta_{j_1}^{\nicefrac{N}{2}} (x_{0,j_1}^{\nicefrac{N}{n_{j_1}}})$. This follows because for an arbitrary monomial $m\in
\kk[x_{0,j_1}, \dots ,x_{n_{j_1},j_1} ]$, any monomial that appears in $\Delta_{j_1}(m)$ has positive coefficients, and $x_{\nicefrac{n_{j_1}}{2},j_1}^{\nicefrac{N}{n_{j_1}}}$ can be obtained
from $x_{0,j_1}^{\nicefrac{N}{n_{j_1}}}$ in ${N}/{2}$ steps by replacing a variable $u$ with another variable appearing
in $\Delta_{j_1}(u)$ at each step. This establishes the claim. A similar argument shows that $\Delta_{j_2}^{\nicefrac{N}{2}}
(x_{0,j_2}^{\nicefrac{N}{n_{j_2}}})$ is equivalent to a non-zero multiple of
$x_{\nicefrac{n_{j_2}}{2},j_2}^{\nicefrac{N}{n_{j_2}}}$ modulo I. The assertion of the proposition now follows because
$\Pi^*_{\bn,\lfloor \bn/2 \rfloor}$ is an algebra homomorphism and $\Pi^*_{\bn,\lfloor \bn/2
\rfloor}(x_{\nicefrac{n_{j}}{2},j}^{\nicefrac{N}{n_{j}}})=x_{0,j}^{\nicefrac{N}{n_{j}}}$
for $j=j_1,j_2$.
\end{proof}


We introduce some invariants which will play a key role in the
construction of our separating set, as they did in the construction
of separating sets for the indecomposable representations, see
\cite{ElmerKohls}. For $1\le j \le k$ and $1\le i \le \lfloor
n_{j}/2 \rfloor$ define
$$f_{i,j}:=\sum_{q=0}^{i-1}(-1)^qx_{q,j}x_{2i-q,j}+\frac{1}{2}(-1)^ix_{i,j}^2$$
and $f_{0,j}=x_{0,j}$. Also, for  $1\le i \le \lfloor
\frac{n_{j}-1}{2} \rfloor$ set
$$s_{i,j}:=\sum_{q=0}^{i}(-1)^q\frac{2i+1-2q}{2}x_{q,j}x_{2i+1-q,j}$$
and $s_{0,j}=x_{1,j}$. Note that we have
$D_{(\bn)}(s_{i,j})=f_{i,j}$ for $0\le i \le \lfloor
\frac{n_{j}-1}{2} \rfloor$. An element $f$ in $\kk[V_{(\bn)}]$
is called a local slice if $D_{(\bn)}(f)\in
\kk[V_{(\bn)}]^{\Ga}$. For a non-zero element $f\in
\kk[V_{(\bn)}]$, let $\nu (f)$ denote the maximum integer $d$ such
that $D_{(\bn)}^{d}(f)\neq 0$. For a local slice $s$ and an
arbitrary polynomial $f$ define
$$\epsilon_s(f):=\sum_{q=0}^{\nu (f)}\frac{(-1)^q}{q!}(D_{(\bn)}^qf)s^q(D_{(\bn)} s)^{\nu (f)-q}.$$
We remark that $\epsilon_s(f)\in \kk[V_{(\bn)}]^{\Ga}$. Furthermore, for $l'+1\le j\le l$, we define
$z_j:=[x_{0,j},f_{n_j/4,j}]^{n_j}$. We can now make our main result precise:


\begin{theorem}\label{thm-MainThm}
Let $T$ denote the union of the following set of polynomials in
$\kk[V_{(\bn)}]^\Ga$.
\begin{enumerate}

\item $f_{i,j}$ for $1\le j \le k$ and $0\le i \le \lfloor n_{j}/2
\rfloor$.
 \item $\epsilon_{s_{i_2,j_2}}(x_{i_1,j_1})$ for $1\le j_1<j_2 \le
k$, $\left\lfloor \frac{n_{j_1}-1}{2}
  \right\rfloor< i_1 \le  n_{j_1}$ and   $0\le i_2 \le \left\lfloor
\frac{n_{j_2}-1}{2} \right\rfloor$.

\item $\epsilon_{s_{i_2,j}}(x_{i_1,j})$ for $1\le j\le k$,  $0\le
i_2 \le \left\lfloor \frac{n_{j}-1}{2} \right\rfloor$, $i_2\le i_1\le n_j$.

\item $\epsilon_{s_{i_2,j_2}}(x_{i_1,j_1})$ for $1\le j_2<j_1\le
k$, $0\le i_1\le n_{j_1}$, $0\le i_2 \le \left\lfloor
\frac{n_{j_2}-1}{2} \right\rfloor$.

\item $w_{j_1,j_2}$ for
$1\le j_1\neq j_2\le l'$.

\item $z_j$ for $l'+1\le j\le l$.
\end{enumerate}
Then  $T$ is a separating set for $\kk[V_{(\bn)}]^\Ga$.
\end{theorem}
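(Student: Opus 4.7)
The objective is to show that if $v,w \in V_{(\bn)}$ satisfy $t(v)=t(w)$ for every $t \in T$, then $h(v)=h(w)$ for every $h \in \kk[V_{(\bn)}]^\Ga$. The plan is to distinguish two regimes according to whether the local-slice derivatives $f_{i,j}$ with $i \le \lfloor (n_j-1)/2\rfloor$ all vanish at $v$ or not, and to handle each regime separately.

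In the generic regime, suppose $f_{i_2,j_2}(v) \neq 0$ for some $(i_2,j_2)$ in the above range. Because $D_{(\bn)}(s_{i_2,j_2}) = f_{i_2,j_2}$ and $D_{(\bn)}(f_{i_2,j_2}) = 0$, the function $s_{i_2,j_2}$ is affine along the $\Ga$-orbit through $v$ with nonzero slope $f_{i_2,j_2}(v)$; the same is true for the orbit of $w$ with the identical slope, since $f_{i_2,j_2} \in T$. I would therefore translate along the orbits to pick representatives $v',w'$ at which $s_{i_2,j_2}$ vanishes. At such a zero, the formula defining $\epsilon_s$ collapses to
\[
\epsilon_{s_{i_2,j_2}}(x_{i_1,j_1})(v') \;=\; x_{i_1,j_1}(v')\cdot f_{i_2,j_2}(v)^{\nu(x_{i_1,j_1})},
\]
so whenever $(i_1,j_1)$ falls into one of the ranges of items (2)--(4), the value $x_{i_1,j_1}(v')$ is recovered from the corresponding element of $T$. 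The remaining coordinates, which lie in the ``lower halves'' of summands with $j_1 \neq j_2$, are then forced by the quadratic identities that express the $f_{i,j}$ in terms of $x_{0,j},\dots,x_{2i,j}$, once the ``upper halves'' are known.

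In the singular regime, $f_{i,j}(v)=0$ for every $i \le \lfloor(n_j-1)/2\rfloor$. Using $f_{i,j}=D_{(\bn)}(s_{i,j})$ together with successive nilpotent translations along $D_{(\bn)}$, I would show that $v$ lies in the $\Ga$-orbit closure of a point in the image of $\Pi_{\lfloor \bn/2\rfloor,\bn}$. Proposition~\ref{prop-projection} then confines every invariant to a polynomial in $\{x_{0,j} : 1\le j \le l\}$, fixed under the order-two sign action on $\{x_{0,j} : 1\le j\le l'\}$. These polynomials are detected monomial by monomial by: (i) the invariants $f_{0,j}=x_{0,j}$ for $l < j \le k$ and the squares $x_{0,j}^2$, which appear as a nonzero multiple of $f_{n_j/2,j}$ for even $n_j$ from item (1); (ii) the invariants $w_{j_1,j_2}$ of item (5), which by Proposition~\ref{double} project to $x_{0,j_1}^{N/n_{j_1}}x_{0,j_2}^{N/n_{j_2}}$ and thus pin down the correlated signs across the $l'$ summands with $n_j\equiv 2\pmod 4$; and (iii) the invariants $z_j$ of item (6), which resolve the individual sign of $x_{0,j}$ for $l'<j \le l$, on which Proposition~\ref{prop-projection} asserts a trivial sign action.

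The main obstacle is the bookkeeping in the generic regime: one must verify that the three index ranges in items (2)--(4) together with item (1) really do pin down every coordinate of $v'$, via a careful case analysis on $j_1$ relative to the active summand index $j_2$ --- especially on the diagonal $j_1=j_2$, where item (3) alone must bridge the ``upper'' coordinates recovered from $\epsilon$-invariants to the ``lower'' coordinates recovered from the quadratic $f_{i,j}$. A secondary subtlety is the transition between the two regimes when only some $f_{i,j}$ vanish; a judicious choice of $(i_2,j_2)$ minimising a suitable weight should align the recovered coordinates so that the singular argument takes over for the residual coordinates on a representation of smaller dimension, enabling an induction on $\dim V$.
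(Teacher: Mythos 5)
Your singular regime is essentially the paper's own second half: vanishing of the $f_{i,j}$ with $i\le\lfloor(n_j-1)/2\rfloor$ forces (inductively, since then $f_{i,j}=\pm\frac{1}{2}x_{i,j}^2$ modulo earlier coordinates) the vanishing of the lower-half coordinates themselves, so such a point lies in the image of $\Pi_{\lfloor \bn/2\rfloor,\bn}$, and your items (i)--(iii) are exactly the reduction, via Propositions~\ref{prop-projection} and \ref{double} and $\Pi^*_{\bn,\lfloor\bn/2\rfloor}(z_j)=x_{0,j}^3$, to separating on $\kk[x_{0,j}\mid 1\le j\le l]^{C_2}$. The genuine gap is in the generic regime. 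There you fix any $(i_2,j_2)$ with $f_{i_2,j_2}(v)\neq 0$ and claim that the coordinates of the slice representative $v'$ omitted from items (2)--(4) --- namely $x_{i_1,j_1}(v')$ with $j_1<j_2$ and $i_1\le\lfloor(n_{j_1}-1)/2\rfloor$, and $x_{i_1,j_2}(v')$ with $i_1<i_2$ --- are ``forced by the quadratic identities'' once the upper halves are known. This is false: already for a summand $V_3$ with $j_1<j_2$ the only available relations are $f_{0,j_1}=x_{0,j_1}$ and $f_{1,j_1}=x_{0,j_1}x_{2,j_1}-\frac{1}{2}x_{1,j_1}^2$, which determine $x_{1,j_1}(v')$ only up to sign, and for larger $n_{j_1}$ the quadratic system has several solutions. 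So agreement of $v$ and $w$ on $T$ does not pin down the slice representatives, and you offer no argument that two distinct solutions give the same value on every invariant --- which is precisely what must be proved and cannot be borrowed from the theorem itself.

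What you call a ``secondary subtlety'' (the choice of $(i_2,j_2)$) is in fact the heart of the argument, and the paper resolves it without any coordinate recovery or induction on $\dim V$. Given the pair $v=(a_{i,j})$, $w=(b_{i,j})$, one chooses $(i',j')$ with $a_{i',j'},b_{i',j'}$ not both zero and \emph{minimal}, i.e.\ $a_{i,j'}=b_{i,j'}=0$ for $i<i'$ and $a_{i,j}=b_{i,j}=0$ for $j<j'$ and all lower-half $i$; if exactly one of $a_{i',j'},b_{i',j'}$ vanishes, then $f_{i',j'}$, which equals $\pm\frac{1}{2}x_{i',j'}^2$ at such points, already separates. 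Otherwise $f_{i',j'}$ is nonzero at both points, the Slice Theorem gives $\kk[V_{(\bn)}]^{\Ga}_{f_{i',j'}}=\kk[\epsilon_{s_{i',j'}}(x_{i_1,j_1})]_{f_{i',j'}}$, and --- this is the key step your proposal is missing --- every omitted generator $\epsilon_{s_{i',j'}}(x_{i_1,j_1})$ (those with $j_1<j'$ and $i_1$ in the lower half, or $j_1=j'$ and $i_1<i'$) \emph{vanishes at both $v$ and $w$}, because $\alpha\cdot x_{i_1,j_1}$ involves only coordinates $x_{i,j_1}$ with $i\le i_1$, all of which are zero at $v$ and $w$ by minimality. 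Hence the values at $v$ and $w$ of a full generating set of the localized invariant ring are determined by $T$, and separation follows. Without this minimality-plus-vanishing mechanism (or a genuine substitute for it), your generic-regime argument does not go through.
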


 \begin{proof}
We first show that the invariants labelled (1)-(4) above separate any pair of vectors that do not
simultaneously lie in $\vv_{V_{(\bn)}}(x_{i_2,j_2} \mid 1\le j_2\le k, \; 0\le i_2 \le \lfloor \frac{n_{j_2}-1}{2}
\rfloor)$. If $v_1=(a_{i,j})$ and $v_2=(b_{i,j})$ are any two such vectors,
then there exists $1\le j'\le k$ such that, for some $0\le i'\le \lfloor \frac{n_{j'}-1}{2} \rfloor$, $a_{i',j'}$ and
$b_{i',j'}$ are not simultaneously zero. We assume that $i'$ and $j'$ are minimal among such indices, that is, that we have
\begin{enumerate}
\item $a_{i,j'}=b_{i,j'}=0$ for $i<i'$. \item
 $a_{i,j}=b_{i,j}=0$
for $j<j'$ and $0\le i\le \lfloor \frac{n_{j}-1}{2} \rfloor$.
\end{enumerate}
If exactly one of $a_{i',j'}$ and $b_{i',j'}$ is zero, then $f_{i',j'}$ separates $v_1$ and $v_2$. Otherwise the value of
any invariant at $v_1$ and $v_2$ is determined by the set $\{f_{i',j'}, \epsilon_{s_{i',j'}}(x_{i_1,j_1}) \mid 0\le i_1\le
n_{j_1}, \; 1\le j_1\le k \}$. Indeed, as $D_{(\bn)}s_{i',j'}=f_{i',j'}$, the ``Slice Theorem" \cite[2.1]{VandenEssen}
implies that
$$\kk[V_{(\bn)}]^{\Ga}_{f_{i',j'}}= \kk[ \epsilon_{s_{i',j'}}(x_{i_1,j_1}) \mid 0\le i_1\le n_{j_1}, \; 1\le j_1\le k]_{f_{i',j'}}.$$
On the other hand, if $i_1<i'$ and $j_1=j'$ or if $0\le i_1\le \lfloor \frac{n_{j_1}-1}{2} \rfloor$ and $j_1<j'$, then $
\epsilon_{s_{i',j'}}(x_{i_1,j_1})$ vanishes
  at $v_1$ and $v_2$ . It follows that the set
  \begin{alignat*}{1}
  f_{i',j'} &\cup \{\epsilon_{s_{i',j'}}(x_{i_1,j_1}) \mid \lfloor \frac{n_{j_1}-1}{2}
  \rfloor < i_1\le n_{j_1}, \; j_1 < j'\} \cup \{\epsilon_{s_{i',j'}}(x_{i_1,j'}) \mid i'\le i_1\le n_{j'}\}\\
& \cup \{\epsilon_{s_{i',j'}}(x_{i_1,j_1}) \mid j'< j_1, \; 0\le
i_1\le n_{j_1}\}
\end{alignat*}
separates $v_1$ and $v_2$ whenever they are separated by some invariant.

It remains to show that $T$ is a separating set  on the zero set
of the ideal
 $I:=(x_{i_2,j_2}\mid 1\le j_2 \le k,~0\le i_2 \le \lfloor \frac{n_{j_2}-1}{2} \rfloor)$.
 Note that $\kk[V_{(\bn)}]/I\cong \Pi^*_{\bn,\lfloor \bn/2 \rfloor}(\kk[V_{(\bn)}])=\kk[V_{(\lfloor \bn/2 \rfloor)}]$.
 Thus, finding a set which separates on $\vv_{V_{(\bn)}}(I)$ is equivalent to finding a subset
 $E\subseteq \kk[V_{(\bn)}]^\Ga$ such that $\Pi^*_{\bn,\lfloor \bn/2 \rfloor}(E)$ separates
the same points of $V_{(\lfloor \bn/2 \rfloor)}$ as $\Pi^*_{\bn,\lfloor \bn/2 \rfloor}(\kk[V_{(\bn)}]^{\Ga})$. By Proposition
\ref{prop-projection}, $\Pi^*_{\bn,\lfloor \bn/2 \rfloor}(\kk[V_{(\bn)}]^{\Ga})\subseteq \kk[x_{0,j} \mid 1\leq j \leq
l]^{C_2}$, where the cyclic group of order two $C_2$ acts as multiplication by $-1$ on the first $l'$ variables and trivially
on the remaining variables.

Consider the subset $B\subseteq \Pi^*_{\bn,\lfloor \bn/2 \rfloor}(T)$ formed by the following:
 \begin{itemize}
 \item $\Pi^*_{\bn,\lfloor \bn/2 \rfloor}(f_{\lfloor \bn_j/2 \rfloor,j})=x_{0,j}^2$, for $1\leq j \leq l$,
\item $\Pi^*_{\bn,\lfloor \bn/2 \rfloor}(w_{j_1,j_2})=dx_{0,j_1}^{N/{n_{j_1}}}x_{0,j_2}^{N/{n_{j_2}}}$ for $1\le j_1\neq
j_2\le l'$,  where $d\neq 0$ and  $N$ is the least common multiple of $n_{j_1}$ and $n_{j_1}$.
 \item $\Pi^*_{\bn,\lfloor \bn/2 \rfloor}(z_j)=x_{0,j}^3$ for $l'+1\le j\le l$, see \cite[Lemma 5.4]{ElmerKohls}.
 \end{itemize}
Showing that $B$ is a separating set for $\kk[x_{0,j} \mid 1\leq j \leq l]^{C_2}$ will end the proof. More precisely, we
show that value of the generators of $\kk[x_{0,j} \mid 1\leq j \leq l]^{C_2}$ is entirely determined by the value of the
elements of $B$. The ring of invariants is given by
$$\kk[x_{0,j} \mid 1\leq j \leq l]^{C_2}=\kk[x_{0,j_1}x_{0,j_2}, x_{0,j} \mid 1\leq j_1 \leq j_2 \leq l',~ l'+1\leq j\leq l]. $$ Suppose $1\le j_1\neq j_2\le l'$. Note that $N/n_{j_1}$ and $N/n_{j_2}$ are odd integers. On points where either
$x_{0,j_1}^2$ or $x_{0,j_2}^2$ is zero, so is $x_{0,j_1}x_{0,j_2}$. Otherwise, we have
\[x_{0,j_1}x_{0,j_2}=\frac{dx_{0,j_1}^{N/{n_{j_1}}}x_{0,j_2}^{N/{n_{j_2}}}}{d(x_{0,j_1}^2)^{1/2(N/{n_{j_1}}-1)}(x_{0,j_2}^2)^{1/2(N/{n_{j_2}}-1)}}.\]
Now suppose $l'+1\le j\le l$. On points where $x_{0,j}^2$ is zero, so is $x_{0,j}$, and otherwise, $x_{0,j}=x_{0,j}^3/x_{0,j}^2$.
Therefore $B\subseteq \Pi^*_{\bn,\lfloor \bn/2 \rfloor}(T)$ is a separating set.
 \end{proof}


Theorem 1 is an easy consequence of Theorem \ref{thm-MainThm}:

\begin{proof}[Proof of Theorem \ref{thm--MainTheoremIntro} ]
 The degree of the each invariant $f_{i,j}$ is two, and the invariants
$z_j$ all have degree three. The degree of $w_{j_1,j_2}$ is
$N/n_{j_1}+N/n_{j_2}$, where $N=\lcm (n_{j_1},n_{j_2})$. Since   $1 \leq j_1,j_2
\leq l'$, we have $N\le (n_{j_1}n_{j_2})/2$ and so the degree of $w_{j_1,j_2}$
is at most $(n_{j_1}+n_{j_2})/2$. Finally, the
degree of $\varepsilon_{s_{i_2,j_2}}(x_{i_1,j_1})$ is
$\deg(s_{i_2,j_2})i_1+1$ which is less than or equal to   $2 n_{j_1}+1$ which is in turn  at most $2n-1$,
since $n_j \leq n-1$ for all $1 \leq j \leq k$. It then
follows that the degree of each invariant in $T$ is at most
$2n-1$, as claimed.

The number of invariants of the form $f_{i,j}$ in our separating
set is $$\sum_{j=1}^k \left \lfloor \frac{n_j}{2} \right \rfloor +1 \leq
\frac{n+k}{2}.$$ Since $1 \leq k \leq n$, this is linear in $n$. Note at this point that for each $j_1,j_2,i_2$ we have $\epsilon_{s_{i_2,j_2}}(x_{0,j_1})=f_{0,j_1}$, so we have already counted these elements.
The number of further invariants in $T$ of the form
$\varepsilon_{s_{i_2,j_2}}(x_{i_1,j_1})$ is
$$\sum_{j_2=1}^k \sum_{j_1=j_2+1}^k \!\!\!\! n_{j_1} \!\! \left \lfloor \frac{n_{j_2}+1}{2} \right \rfloor +
\sum_{j=1}^k \sum_{i_2=0}^{\left \lfloor \frac{n_j-1}{2} \right \rfloor} \!\!\!\! (n_j-i_2+1)  -1
+ \sum_{j_2=1}^k \sum_{j_1=1}^{j_2-1} \!\!\left \lfloor
\frac{n_{j_1}+2}{2} \right \rfloor \left \lfloor \frac{n_{j_2}+1}{2} \right \rfloor.$$
Here the three terms correspond to the invariants labeled
(4),(3), and (2) in our definition of $T$. Using that for any half-integer $x$ we have $x-\nicefrac{1}{2} \leq \left \lfloor x \right \rfloor \leq x$ (which we also used to derive the third term above), the first term is
bounded above by $$\frac{1}{2}\sum_{j_2=1}^k \sum_{j_1=j_2+1}^k
n_{j_1}(n_{j_2}+1)$$
$$\leq \frac{1}{4}\left( \left(\sum_{j_1=1}^k n_{j_1}\right) \left( \sum_{j_2=1}^k n_{j_2} \right) - \sum_{j=1}^k n_j^2 \right)
+ \frac{k-1}{2} \sum_{j=1}^k n_j$$
$$= \frac{1}{4}(n-k)(n+k-2)-\frac{1}{4}\sum_{j=1}^k n_j^2.$$ For the same reason, the second term is bounded above by
$$\sum_{j=1}^k \frac{1}{2}(n_j+1)^2 - \sum_{j=1}^k\frac{1}{2} \frac{(n_j-2)}{2} \frac{n_j}{2} = \frac{3}{8} \sum_{j=1}^k
n_j^2 + \ \text{linear terms}.$$
The third term is bounded above by $$\sum_{j_2=1}^k \sum_{j_1=1}^{j_2-1} \frac{(n_{j_1}+2)(n_{j_2}+1)}{4}$$
$$= \sum_{j_2=1}^k \sum_{j_1=1}^{j_2-1} \frac{(n_{j_1}+1)(n_{j_2}+1)}{4}+  \sum_{j_2=1}^k \sum_{j_1=1}^{j_2-1} \frac{n_{j_2}+1}{4}$$

$$\leq \frac{1}{8}\sum_{j_1=1}^k \sum_{j_2=1}^{k} (n_{j_1}+1)(n_{j_2} + 1) - \frac{1}{8}\sum_{j=1}^k (n_j+1)^2 +\frac{1}{4}\sum_{j_1=1}^k \sum_{j_2=1}^{k}( n_{j_2}+1) -   \frac{1}{4}\sum_{j=1}^k( n_{j_2}+1)$$
$$= \frac{1}{8}n^2-\frac{1}{8}\sum_{j=1}^k n_j^2 + \frac{1}{4}nk +  \text{linear terms}.$$
Moreover, there are $\frac{1}{2}l'(l'-1)$
invariants of the form $w_{j_1,j_2}$, and $l-l'$ of the form $z_j$.
Ignoring linear terms, the size of $T$ is therefore bounded above
by
$$\frac{1}{4}nk+\frac{3}{8}n^2-\frac{1}{4}k^2+\frac{1}{2}{l'}^2$$
which is indeed quadratic in $n$ as claimed, since $l' \leq k$ and
$k \leq n$. Note that when $k=1$ we get a separating set of size
approximately $\frac{3}{8}n^2$, which coincides with the size of
the separating set found in \cite{ElmerKohls}. Indeed, our separating set specializes to the separating
set found in \cite{ElmerKohls} when $k=1$.

The following tables show the exact size of $T$ for certain
representations $V$ of $\Ga$. It also shows the size of a minimal
generating set $c_{\bn}$ of $\kk[V_{(\bn)}]^{\Ga}$, when this is known. The data for the numbers $c_{\bn}$ was taken from
Andries Brouwer's website \cite{BrouwerWeb}. Note that $nV_k$ is taken to mean the direct sum of $n$ copies of
$V_k$. The generators of $nV_1$ which coincide with our separating set $T$ were first conjectured by Nowicki \cite{NowickiConj}, and first proved by Khoury \cite{KhouryNowickiConj}. The case $nV_2$ was recently solved by Wehlau  \cite{WehlauWeitzenboeck}.

\begin{table}[h]\label{gentable1}
{
\begin{tabular}{c||c|c|c|c|c|c|c|c|c|c}
$V$&$2V_2$ &$3V_2$&$4V_2$&$nV_2$&$V_3$&$2V_3$&$3V_3$&$4V_3$&$5V_3$&$nV_3$ \\
\hline \hline
$|T|$& 10& 21& 36& $2n^2+n$&7 & 24& 51& 108&135 & $5n^2+2n$ \\
\hline
$|c_{\bn}|$&6& 13 & 24& $\frac{1}{6}n(n^2+3n+8)$ &4 &26 &97 &280&689&?\\
\end{tabular}
}
\end{table}

\begin{table}[h]\label{gentable2}
{
\begin{tabular}{c||c|c|c|c|c|c|c|c|c|c}
$V$&$\!V_4\!$ &\!$2V_4$\!&$\!3V_4\!$&$\!4V_4$\!&\!$5V_4$\!&\!$nV_4$\!&\!$V_5$\!&\!$2V_5$\!&\!$nV_5$\!&\!$nV_6$ \\
\hline \hline
$|T|$& 11& 35& 75& 128&195 & $7n^2+4n$& 16& 56& $12n^2+4n$&$\frac{1}{2}(31n^2+9n)$   \\
\hline
$|c_{\bn}|$&5& 28 & 103& 305&? &? &23 &?&?&?\\
\end{tabular}
}
\end{table}

\begin{table}[h]\label{gentable3}
{
\begin{tabular}{c||c|c|c|c|c|c|c|c|c}
$V$&$\!V_1\!\oplus\! V_2\! $ &$\!V_1 \!\oplus\! V_3\!$&$\!V_1 \!\oplus\! V_4\!$&$\!V_1 \!\oplus\! V_5\!$&$\!V_2 \!\oplus\! V_3\!$&$\!V_2 \!\oplus V_4\!$&$\!V_2 \oplus
V_5\!$&$\!V_3 \!\oplus\! V_4\!$&$\!V_3 \!\oplus\! V_5\!$\\ 
\hline \hline
$|T|$& 7& 12& 17& 23&15 &21 & 29&30 & 39\\ 
\hline
$|c_{\bn}|$&5& 13 & 20& 94&15 &18 &92 &63&?\\ 
\end{tabular}
}
\end{table}

To prove that $T$ contains only invariants depending on at most two summands, simply observe that the invariants
$f_{i,j}$ and $z_j$ are non-zero only on the summand $V_{n_j}$ of
$V_{(\bn)}$, while $\varepsilon_{s_{i_2,j_2}}(x_{i_1,j_1})$ and
$w_{j_1,j_2}$ are non-zero on only on $V_{n_{j_1}}$ and $V_{n_{j_2}}$.

\end{proof}

\section{A note on Helly dimension}
\label{section-helly}

In \cite{DomokosSzabo}, the authors define the \emph{Helly
dimension} of an algebraic group as follows:

\begin{Def}[{see \cite[Definition~1.1]{DomokosSzabo}}]
The \emph{Helly dimension} $\kappa(G)$ of an algebraic group is the minimal natural number $d$ such that any finite system
of closed cosets in $G$ with empty intersection, has a subsystem consisting of at most $d$ cosets with empty intersection.
We define $\kappa(G):=\infty$, if there are no such natural numbers.
\end{Def}

They go on to show that if $\kk$ is a field of characteristic zero, and $G$ acts on the affine $\kk$-variety $X:=
\Pi_{i=1}^k X_i$, then there exists a dense $G$-stable open subset $U$ of $X$ and a set $S \subset \kk[X]^G$ of invariants
each depending on at most $\kappa(G)$ indecomposable factors of $X$ such that $S$ is a separating set on $U$
\cite[Theorem~4.1]{DomokosSzabo}. It is easy to see that the Helly dimension of $\mathbb{G}_a$ is two: in characteristic
zero, the additive group does not have any proper nontrivial closed subgroups. That is, its only proper subgroup is $\{0\}$,
and the only possible cosets are singletons. In particular, it follows from their work that for any product of
$\Ga$-varieties, we should be able to find an ideal $I$ of $\kk[X]^{\Ga}$ and a set $S \subset \kk[X]^{\Ga}$ of invariants
each depending on at most two factors, such that $S$ is a separating set on the open set $X \setminus \mathcal{V}(I)$. We
recover this result for representations of $\Ga$ in the first part of the proof of Theorem~\ref{thm-MainThm}. In fact, one
could easily prove the same result directly for a product of arbitrary $\Ga$-varieties by applying the ``Slice Theorem'' with
a local slice depending on just one factor.

For $X$ a product of $G$-varieties, Domokos and Szabo also consider the quantities
\[\sigma(G,X):= \min\{d \mid  \exists S \subset \kk[X]^G, \textrm{ a separating set depending on $d$ factors of $X$}\},\]
and $\delta(G,X)$, defined as the minimum natural number $d$ such that given $x \in X$ with $Gx$ closed in $X$, there exists
a set $\{j_1,j_2,\ldots,j_d\}$ such that the projection $y$ of $x$ onto the subvariety $Y:=\Pi_{i=1}^d X_{j_i}$ has $Gy$
closed in $Y$ with the same dimension as $Gx$. The supremum of these quantities over all possible product varieties are
denoted by $\sigma(G)$ and $\delta(G)$, respectively. They remark that for any unipotent group $G$, $\delta(G) \leq \dim(G)$
\cite[Section~5]{DomokosSzabo}, and in particular $\delta(\Ga)=1$. Finally, they show that for any reductive group $G$, we
have \cite[Lemma~5.9]{DomokosSzabo} \[\sigma(G) \leq \kappa(G)+\delta(G).\]

We do not know whether this  inequality holds for non-reductive groups. If it did, it would follow that, given any
affine $\Ga$-variety $X$, we could find a separating subset of $\kk[X]^G$ depending on at most 3 indecomposable factors of
$X$. Theorem \ref{thm-MainThm}(3) shows that, provided $\Ga$ acts linearly, two factors suffices. It would be interesting
to know whether this holds for products of arbitrary affine $\Ga$-varieties.

\bibliographystyle{plain}
\bibliography{MyBib}

\end{document}